\newcommand{\textd}{\text{d}}
\newcommand{\bfone}{\mathbf{1}}
\newcommand{\bbn}{\mathbb{N}}
\newcommand{\ep}{\epsilon}
\newcommand{\bbp}{\mathbb{P}}
\newcommand{\cals}{\mathcal{S}}
\newcommand{\length}{\text{length}}
\theoremstyle{plain}
\newtheorem{theorem}{Theorem}[section]
\newtheorem{proposition}[theorem]{Proposition}
\newtheorem{lemma}[theorem]{Lemma}
\newtheorem{corollary}[theorem]{Corollary}
\theoremstyle{definition}
\newtheorem{definition}[theorem]{Definition}
\newtheorem{example}[theorem]{Example}
\newtheorem*{question*}{Question}
\theoremstyle{remark}
\newtheorem*{claim*}{Claim}
\newcommand{\partition}{P}
\newcommand{\textl}{\text{L}}
\newcommand{\texts}{\text{S}}
\newcommand{\val}{\text{val}}
\newcommand{\Bpsi}{B}
\newcommand{\bpsi}{b}
\newcommand{\zsigma}{z}
\newtheorem*{minimax-theorem}{The Minimax Theorem}
\newtheorem*{martins-theorem}{Martin's Theorem}
\begin{document}

\title[Eventual perfect monitoring]{The determinacy of infinite games with eventual perfect monitoring}
\author{Eran Shmaya}\email{e-shmaya@kellogg.northwestern.edu}\address{Kellogg School of Management\\ Northwestern University}
\date{\today}
\thanks{Thanks to to the anonymous referee for helpful suggestions and comments, and also to Chris Chambers, Tzachi Gilboa, John Levy, Ehud Lehrer, Wojciech Olszewski, Phil Reny, Eilon Solan, Bill Sudderth and Rakesh Vohra.} 
\subjclass[2000]{Primary: 91A15, 03E75. Secondary: 03E15, 91A60}
\keywords{Infinite games, determinacy, stochastic games, imperfect monitoring} 
\begin{abstract}An infinite two-player zero-sum game with a Borel winning set, in which the opponent's actions are monitored eventually but not necessarily immediately after they are played, is determined. The proof relies on a representation of the game as a stochastic game with perfect information, in which Chance operates as a delegate for the players and performs the randomizations for them, and on Martin's Theorem about determinacy of such games.\end{abstract}
\maketitle
\section{Setup}\label{sec-setup}
Consider an infinite two-player zero-sum game that is given by a triple $\left(A,\left(\partition_n\right)_{n\in\bbn},W\right)$ where $A$ is a finite set of \emph{actions}, $\partition_n$ is a partition of $A^n$ for every $n\in\bbn$, the \emph{information partition of stage $n$}, and $W\subseteq A^\bbn$ is a Borel set, the \emph{winning set} of player 1. The game is played in stages: player 1 chooses an action $a_0\in A$; then player 2 chooses an action $a_1\in A$; then player 1 chooses an action $a_2\in A$, and so on, ad infinitum. Before choosing $a_n$, the player who plays at stage $n$ receives some information about the actions of previous stages: Let $h=(a_0,a_1,\dots,a_{n-1})$ be the \emph{finite history} that consists of the actions played before stage $n$; then before choosing $a_n$, the player who plays at stage $n$ observes the atom of $\partition_n$ that contains $h$.  
Player 1 wins the game if the \emph{infinite history} $(a_0,a_1,\dots)$ is in $W$. When the action set and information partitions are fixed, I denote the game by $\Gamma(W)$.

A \emph{behavioral strategy} $x=(x_n)_{n\in\bbn}$ of player 1 is a sequence $\{x_n: \partition_n\rightarrow \Delta(A)\}_{n=0,2,4,\dots}$ of functions: At stage $n$, after observing the finite history $h=(a_0,a_1,\dots,a_{n-1})$, player 1 randomizes his action according to $x_n(\pi_n(h))$, where $\pi_n(h)$ is the atom of $\partition_n$ that contains $h$. Abusing notations, I sometimes write $x_n(h)$ instead of $x_n(\pi_n(h))$. Behavioral strategies $y$ of player 2 are defined analogously. 

Every pair $x,y$ of strategies induces a probability distribution $\mu_{x,y}$ over the set $A^\bbn$ of infinite histories or \emph{plays}: $\mu_{x,y}$ is the joint distribution of a sequence $\alpha_0,\alpha_1,\dots\dots$ of $A$-valued random variables such that
\begin{equation}\label{mu-x-y-def}
\bbp\left(\alpha_n=a\left|\alpha_0,\dots,\alpha_{n-1}\right.\right)=
 \begin{cases}x_n(\alpha_0,\dots,\alpha_{n-1})[a],&\text{ if }n\text{ is even},\\
 y_n(\alpha_0,\dots,\alpha_{n-1})[a],&\text{ if }n\text{ is odd}.\end{cases}
\end{equation}
I call such a sequence of random variables an $(x,y)$-\emph{random play}. If the players play according to the strategy profile $(x,y)$, then the expected payoff for player 1 is given by
\begin{equation}\label{expected-via-alpha}\mu_{x,y}(W) = 
\bbp\left((\alpha_0,\alpha_1,\dots)\in W\right),
\end{equation}
where $\alpha_0,\alpha_1,\dots$ is an $(x,y)$-random play
.

The \emph{lower value} $\underline\val~\Gamma(W)$ and \emph{upper value} $\overline\val~\Gamma(W)$ of the game $\Gamma(W)$ are defined by:
\[\underline\val~\Gamma(W)=\sup_x\inf_y \mu_{x,y}(W),\quad\text{and }
\overline\val~\Gamma(W)=\inf_y\sup_x  \mu_{x,y}(W),\] where the suprema are taken over all strategies $x$ of player 1 and the infima over all strategies $y$ of player 2. The game is \emph{determined} if the lower and upper values are equal, $\underline\val~\Gamma(W)=\overline\val~\Gamma(W)$, in which case their common value is called the \emph{value} of the game.  
For $\ep\geq 0$, a strategy $x$ of player 1 is \emph{$\ep$-optimal} if $\mu_{x,y}(W)\geq \underline\val~\Gamma(W)-\ep$ for every strategy $y$ of player 2. We also say that player 1 can \emph{guarantee} payoff of at least $\underline\val~\Gamma(W)-\ep$
 by playing such a strategy $x$. $\ep$-optimal strategies of player 2 are defined analogously. 

Let $\sim_n$ be the equivalence relation over infinite histories such that $u\sim_n u'$ whenever $u|_n$ and $u'|_n$ belong to the same atom of $\partition_n$, where $u|_n$ and $u'|_n$ are the initial segments of $u$ and $u'$ of length $n$.
The interpretation is that if $u,u'\in A^\bbn$ and $u\sim_n u'$, then at stage $n$ the player cannot distinguish between $u$ and $u'$. Say that \emph{at stage $n$ the player observes the action of stage $m$} if, for every pair  of infinite histories $u=(a_0,a_1,\dots)$ and $u'=(a_0',a_1',\dots)$, $u\sim_nu'$ implies $a_m=a_m'$.
\begin{definition}The information partitions $(\partition_n)_{n\geq 0}$ satisfy \emph{perfect recall} if the following conditions are satisfied:
\begin{enumerate}\item Players know their own actions: at stage $n$ the player observes the action of stage $n-2$. 
\item Players do not forget information: if $u,u'\in A^\bbn$ and $u\sim_{n+2}u'$ then $u\sim_{n}u'$.
\end{enumerate}\end{definition}\medskip
The setup of infinite games with perfect recall is general enough to subsume two cases which have been extensively studied:

\noindent\textbf{Gale-Stewart games.} If, at every stage $n$, players observe previous actions of their opponents, then the game is called a \emph{Gale-Stewart game} or a \emph{game with perfect information}. Gale and Stewart~\cite{gale-stewart-53} proved that such games are determined if the winning set $W$ is closed (or open) and asked whether the result can be extended to more complex winning sets. In a seminal paper, Martin~\cite{martin-75} proved that the game is determined for every Borel winning set $W$. Gale-Stewart games admit pure $0$-optimal strategies, and the value is $0$ or $1$. Moreover, Gale-Stewart games with an infinite action set $A$ and Borel winning set are also determined.

\noindent\textbf{Blackwell games.} Assume that at even stages $n=2k$, player 1 observes the actions of stages $0,1,\dots,2k-1$, and at odd stages $n=2k+1$, player 2 observes the actions of stages $0,1,\dots,2k-1$ (his own actions and all the previous actions of his opponent except for the last one), and that the information partitions are the roughest partitions that satisfy these conditions.
This essentially means that the players choose actions simultaneously and independently at stages $2k$ and $2k+1$, and then both actions are made public. Such games are called \emph{Blackwell games}. Blackwell~\cite{blackwell-69,blackwell-89} proved the determinacy of Blackwell games (which he called ``infinite games with imperfect information'') with a $G_\delta$ winning set, and conjectured that every Blackwell game with a Borel winning set is determined. Vervoort~\cite{vervoort-96} advanced higher in the Borel hierarchy, proving determinacy of games with $G_{\delta\sigma}$ winning sets. 
Blackwell's conjecture was proved by Martin in 1998~\cite{martin-98}.

Gale-Stewart games and Blackwell games differ in the timing of \emph{monitoring} -- the observation of the opponent's actions: whereas in Gale-Stewart games monitoring is immediate, in Blackwell games player 2's monitoring is delayed by one stage. Both setups satisfy a property that I call \emph{eventual perfect monitoring}, which means that the entire history of the game is known to every player at infinity.
One example of eventual perfect monitoring, of which Blackwell games are a special case, is \emph{delayed monitoring}, introduced by Scarf and Shapley~\cite{scarf-shapley-57}, when the action of stage $m$ is monitored after some lag $d_m$. But the setup of games with eventual perfect monitoring is more general than the setup of games with delayed monitoring. First, the former setup allows the length of the lag to depend on the history of the games. Second, it allows the information to be revealed in pieces; for example, a player can observe some function of the previous actions of his opponent before he observes the actions themselves. 
\begin{definition}
The information partitions $\{P_n\}_{n\in \bbn}$ satisfy \emph{eventual perfect monitoring} if for every $u,u'\in A^\bbn$ such that $u\neq u'$, there exists an $n$ such that $u\nsim_k u'$ for every $k>n$. 
\end{definition}
The purpose of this paper is to prove the following theorem.
\begin{theorem}\label{thetheorem}Let $\Gamma=\left(A,(\partition_n)_{n\geq 0},W\right)$ be an infinite game with a finite action set, a Borel winning set, perfect recall, and eventual perfect monitoring. Then $\Gamma$ is determined.\end{theorem}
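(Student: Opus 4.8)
The plan is to deduce determinacy from Martin's theorem on stochastic games of perfect information by exhibiting $\Gamma$ as such a game. First I would construct an auxiliary game $\Gamma^*$ in which neither player ever randomizes: whenever a player is called upon to move at stage $n$, he instead announces to Nature the mixed action $\mu\in\Delta(A)$ that his behavioral strategy would use, and Nature --- his delegate --- immediately draws the realized action $a_n$ according to $\mu$. The announcements are deterministic moves and Nature's draws are chance moves, so $\Gamma^*$ is a stochastic game; since $A$ is finite, each announcement ranges over the compact space $\Delta(A)$. The payoff is the probability that the sequence of realized actions lies in $W$, which is a Borel condition, and eventual perfect monitoring guarantees that every realized action is eventually disclosed, so that the infinite play --- and hence membership in $W$ --- is almost surely determined.

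The whole difficulty is to arrange the disclosures so that $\Gamma^*$ is a game of \emph{perfect information} whose value nonetheless coincides with that of $\Gamma$. I would schedule Nature's disclosure of each realized action $a_m$ to follow the refinement of the relations $\sim_n$: a player, when he announces at stage $n$, is shown exactly the information that the atom $\pi_n(h)$ carries in $\Gamma$, no sooner and no later. Perfect recall --- that $\sim_{n+2}$ refines $\sim_n$ along each parity and that a player always sees his own past actions --- guarantees that this schedule never retracts information, so the nodes of $\Gamma^*$ form a genuine perfect-information tree; disclosure ``in pieces'' is reproduced by letting Nature reveal coarser functions of $a_m$ through successive chance nodes before revealing $a_m$ itself. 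With the tree so defined, Martin's theorem applies and yields a value $v^*$ for $\Gamma^*$.

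It then remains to prove $v^*=\underline\val~\Gamma(W)=\overline\val~\Gamma(W)$. An optimal strategy for a player in $\Gamma^*$ prescribes at each of his nodes a distribution, which --- provided it depends only on the information carried by $\pi_n(h)$, a point I return to below --- I read off as the value $x_n(\pi_n(h))$ of a behavioral strategy in $\Gamma$; the matched disclosure schedule then makes the induced distribution on plays agree with $\mu_{x,y}$ for every behavioral reply of the opponent. Running this translation for player 1 gives $\underline\val~\Gamma(W)\geq v^*$ and running it for player 2 gives $\overline\val~\Gamma(W)\leq v^*$; since $\underline\val~\Gamma(W)\leq\overline\val~\Gamma(W)$ always holds, the three numbers coincide and $\Gamma$ is determined.

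I expect the main obstacle to lie in the last step, in the legitimacy of the translation rather than in its bookkeeping. In $\Gamma^*$ every announcement is public, so a strategy there is a priori permitted to condition on the opponent's past announced distributions --- information a player never has in $\Gamma$ --- and I must show that this surplus is strategically worthless, so that optimal play in $\Gamma^*$ may be taken to depend only on the disclosed \emph{realized} actions. Since the payoff and the transitions depend on the announcements only through the actions Nature draws from them, a past distribution is conditionally irrelevant once its realized action is known; turning this intuition into a proof is exactly the place where the minimax theorem enters, interchanging $\sup$ and $\inf$ one stage at a time. In a finite game this interchange would be dispatched by backward induction, but here there is no last stage, and it must instead be underwritten by the Borel determinacy of $\Gamma^*$ itself --- this is the heart of Martin's method for Blackwell games, and the work is to carry it through when the monitoring is delayed, history-dependent, and revealed in pieces rather than all at once.
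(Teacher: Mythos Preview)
Your overall architecture---an auxiliary stochastic game $\Gamma^\ast$ with perfect information in which players announce mixtures and Nature randomizes as their delegate, Martin's theorem for $\Gamma^\ast$, then a transfer back to $\Gamma$---is the paper's. But the proposal has a genuine gap at exactly the point you yourself flag as the main obstacle, and the paper's resolution is an idea you do not have.

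You propose to prove both $\underline\val\,\Gamma(W)\geq v^\ast$ and $\overline\val\,\Gamma(W)\leq v^\ast$ by translating optimal strategies from $\Gamma^\ast$ to $\Gamma$. As you note, this requires showing that the opponent's public announcements in $\Gamma^\ast$ are strategically worthless, and you propose to do this by the stage-by-stage $\sup/\inf$ interchange that drives Martin's Blackwell-games proof. The paper's author states explicitly that he was unable to adapt Martin's proof to eventual perfect monitoring, and the paper does \emph{not} attempt this. Instead it proves only one inequality directly: Lemma~\ref{thelemma} shows $\val\,\Gamma^\ast_\ep(W)\leq\overline\val\,\Gamma(W)+\ep$ by having player~2 in $\Gamma^\ast_\ep$ simply announce the stages of an $\ep$-optimal behavioral strategy from $\Gamma$, ignoring player~1's public announcements entirely; against any reply $x^\ast$ one then reads off a strategy $x$ in $\Gamma$ with equal payoff. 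The reverse inequality---where your surplus-information problem bites, since it is player~1 who would have to ignore what he sees---comes from a separate device. By the capacity side of Martin's theorem (Proposition~\ref{pro-martin}) there is a compact $W_0\subseteq W$ with $\val\,\Gamma^\ast_\ep(W_0)>\val\,\Gamma^\ast_\ep(W)-\ep$, and Lemma~\ref{lem-compact} (the Minimax Theorem on the normal form, using only perfect recall and upper semicontinuity) gives $\underline\val\,\Gamma(W_0)=\overline\val\,\Gamma(W_0)$ outright for compact $W_0$. Chaining
\[\underline\val\,\Gamma(W)\geq\underline\val\,\Gamma(W_0)=\overline\val\,\Gamma(W_0)\geq\val\,\Gamma^\ast_\ep(W_0)-\ep>\val\,\Gamma^\ast_\ep(W)-2\ep\]
and its mirror for player~2 finishes the proof. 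This detour through compact winning sets is the missing ingredient; your plan leaves precisely the hard step (``carry Martin's method through'') undone. A smaller but real gap: Martin's theorem needs finite action sets, so announcing an element of $\Delta(A)$ is not allowed. The paper replaces $\Delta(A)$ at stage $n$ by a finite $\ep/2^n$-dense subset $\Delta_{\ep,n}$, has each player announce a \emph{contingent} plan $b_n:P_n\to\Delta_{\ep,n}$ rather than a single mixture (Nature then reveals at stage $n$ only those realized actions $a_m$ that the opponent would have observed by stage $n$ in $\Gamma$), and controls the $\ep$ loss by a coupling lemma.
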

The proof of the theorem relies on the stochastic extension of Martin's theorem about the determinacy of Blackwell games. However, except for the simple case in which the stages are divided into blocks and previous actions are monitored at the end of each block, I was unable to find an immediate reduction of the eventual perfect monitoring setup to the Blackwell games setup, nor was able to adapt Martin's proof to the eventual perfect monitoring setup. The difficulty stems from the fact that Martin's proof uses the existence of \emph{proper subgames} -- occurrences along the play path where the current partial history is commonly known. However, even under simple monitoring structures, such as when one player observes opponent's action with two periods delay, the game does not have proper subgames.

Infinite games with Borel winning sets have recently been used in economics literature on testing the quality of probabilistic predictions. Consider a forecaster who claims to know the probability distribution that governs some stochastic process. To prove his claim, the forecaster provides probabilistic predictions about the process. An inspector tests the forecaster's reliability using the infinite sequence of predictions provided by the forecaster and the observed realization of the process. Using Martin's Theorem about the determinacy of Blackwell games, I proved~\cite{shmaya-08} that any inspection which is based on predictions about the next-day realization of the process is manipulable, i.e., it can be strategically passed by a charlatan. Theorem~\ref{thetheorem} of this paper can be used to prove that inspections based on daily predictions about an arbitrarily long finite horizon are also manipulable~\cite[Section 5]{shmaya-08}. On the other hand, Olszewski and Sandroni~\cite{olszewski-sandroni-09} gave an example for a non-manipulable inspection which is based on prediction about the infinite future. Such an inspection relies on a game without eventual perfect monitoring.

In Section~\ref{sec-examples} I give some examples of games with and without eventual perfect monitoring. In Section~\ref{sec-compact} I prove the determinacy of infinite games with perfect recall and a compact winning set; this result is used in the proof of Theorem~\ref{thetheorem}. The proof of the theorem is in Section~\ref{sec-proof}. Section~\ref{sec-discussion} discusses the role of eventual perfect monitoring in the proof. Section~\ref{sec-open} discusses some open questions. 
Martin's Theorem is reviewed in the appendix.
\section{Endurance games}\label{sec-examples}
All the examples in this section have the same action set and the same winning set. The action set is $A=\{\texts,\textl\}$: At every stage, each player decides whether to Stay or Leave the game. Once a player leaves, his future actions do not affect the outcome of the game. For an infinite history $u=(a_0,a_1,\dots)$, let $n^1(u)=\min\left\{n\text{ even }\left|a_n=\textl\right.\right\}$ be the (possibly infinite) first stage in which player 1 left the game, and let $n^2(u)$ be the first stage in which player 2 left. Let \begin{equation}\label{def-w-examples}
W=\left\{u\in A^\bbn\left| \left(n^2(u)<n^1(u)<\infty\right)\text{ or }\left(n^1(u)<\infty\text{ and }n^2(u)=\infty\right)\right.\right\}
\end{equation}
be the winning set of player 1. The game models a `last man standing' endurance contest, as in a college drinking game or religious conflicts: Player 1 wants to leave the game at some point, but he doesn't want to be the first to leave. 

In Example~\ref{exm-1} both players have eventual perfect monitoring. In Example~\ref{exm-2} none of the players has eventual perfect monitoring. In Example~\ref{exm-3} only player 1 has eventual perfect monitoring.
\begin{example}\label{exm-1}Let $k$ be a positive integer. Assume that at stage $n$ each player observes his own actions and the actions of his opponent at stages smaller than $n-k$. Then the value of the game is $0$. An optimal strategy for player 2 is to play S as long as he is not informed that player 1 has played L. When player 2 knows that player 1 played L at some point, player 2 then plays L.\end{example}
Note that in the previous example, the number $k$ need not be constant. It can depend on the stage number, and can differ between the players. As long as player 2 knows the actions of his opponent eventually, the game is determined and the value is $0$. (The independence of the value on the information partitions in this example is not typical)
\begin{example}\label{exm-2}Assume that each player knows his own previous actions, but does not observe his opponent's actions. Then the game is not determined. In fact, $\underline\val~\Gamma=0$ and $\overline\val~\Gamma=1$.\end{example}
\begin{example}\label{exm-3}Assume that player 1 observes the past actions of player 2, but player 2 doesn't observe the past actions of player 1. Then the game is not determined. In fact, $\underline\val~\Gamma=1/2$ and $\overline\val~\Gamma=1$. An optimal strategy for player 1 is: At stage $0$ play $L$ or $S$ with probability $1/2$, and, at stage $2k$ for $k\geq 1$, play the action of player 2 from stage $2k-1$. \end{example}
\section{Games with a compact winning set}\label{sec-compact}
The set $A^\bbn$ of plays is naturally endowed with the product topology. In this section I prove the special case of Theorem~\ref{thetheorem} for compact winning sets. The determinacy follows from perfect recall alone. The proof uses two standard results from game theory: the Minimax Theorem for normal form games and Kuhn's Theorem.

Recall that a \emph{normal form game} is given by a triple $(\Sigma,\Theta,R)$ where $\Sigma$ and $\Theta$ are Borel spaces of \emph{pure strategies} for players 1 and 2, and $R:\Sigma\times\Theta\rightarrow [0,1]$ is a Borel \emph{payoff function}. A \emph{mixed strategy} $\xi$ of player 1 is a probability distribution over $\Sigma$. Mixed strategies $\tau$ of player 2 are defined analogously. The mixed extension of the normal form game $(\Sigma,\Theta,r)$ is \emph{determined} if
\[\sup_{\xi\in\Delta(\Sigma)}\inf_{\theta\in\Theta}\int R(\sigma,\theta)\xi(\textd \sigma)=\inf_{\tau\in\Delta(\Theta)}\sup_{\sigma\in\Sigma}\int R(\sigma,\theta)\tau(\textd \theta).\]
It follows from Fan's Minimax Theorem~\cite[Theorem 2]{fan-53} that if $\Sigma$ is a compact topological space and the function $R(\cdot,\theta)$ is upper semicontinuous for every $\theta\in\Theta$, then the mixed extension of the normal form game $(\Sigma,\Theta,R)$ is determined.

Let $\Gamma=(A, \{P_n\}_{n\in\bbn},W)$ be an infinite game with perfect recall. The \emph{normal form of $\Gamma$} is the normal form game $N(\Gamma)=(\Sigma,\Theta,R)$ defined as follows. A pure strategy $\sigma\in\Sigma$ of player 1 is a sequence $\{\sigma_n:P_n\rightarrow A\}_{n\text{ even }}$  of functions: at stage $n$, after the finite history $h=(h_0,h_1,\dots,h_{n-1})$ was played, player 1 plays $\sigma_n(\pi_n(h))$, where $\pi_n(h)$ is the atom of $P_n$ that contains $h$. Pure strategies $\theta$ of player 2 are defined analogously. Every pair $\sigma,\theta$ of pure strategies of players 1 and 2 determines an infinite history $u(\sigma,\theta)=(a_0,a_1,\dots)$ that is given by
\[
a_n=\begin{cases}\sigma_n\left(\pi_n\left(a_0,\dots,a_{n-1}\right)\right),&\text{ for even }n,\\\theta_n\left(\pi_n\left(a_0,\dots,a_{n-1}\right)\right),&\text{ for odd }n.\end{cases}\]The payoff function of $N(\Gamma)$ is $R(\sigma,\theta)=\bfone_W(u(\sigma,\theta))$. Kuhn's Theorem~\cite[Theorem D.1]{sorin} states the equivalence between mixed strategies and behavioral strategies in games with perfect recall. In particular, the game $\Gamma$ is determined if and only if its normal form game $N(\Gamma)$ is determined.
\begin{lemma}\label{lem-compact}An infinite game with a finite action set, perfect recall, and a compact winning set is determined.\end{lemma}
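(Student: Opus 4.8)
The plan is to apply the Minimax Theorem to the normal form game $N(\Gamma)=(\Sigma,\Theta,R)$ and then invoke Kuhn's Theorem to transfer determinacy back to $\Gamma$. Since the action set $A$ is finite, each player's pure strategy is a function from the (finitely many, at each stage) atoms of the partitions into $A$; thus the set $\Sigma$ of pure strategies of player 1 is a countable product of finite sets, namely $\Sigma=\prod_{n\text{ even}}A^{\partition_n}$, and likewise for $\Theta$. Endowed with the product topology, $\Sigma$ is therefore compact (a product of finite discrete spaces is compact by Tychonoff), which supplies the compactness hypothesis of the Minimax Theorem. So the first step is to record that $\Sigma$ and $\Theta$ are compact metrizable spaces.

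The crux is the semicontinuity hypothesis: I must check that for every fixed $\theta\in\Theta$ the payoff function $\sigma\mapsto R(\sigma,\theta)=\bfone_W\bigl(u(\sigma,\theta)\bigr)$ is upper semicontinuous on $\Sigma$. I would break this into two observations. First, the map $\sigma\mapsto u(\sigma,\theta)$ is continuous from $\Sigma$ into $A^\bbn$ for fixed $\theta$: the initial segment $u(\sigma,\theta)|_n$ is determined recursively by finitely many coordinates of $\sigma$ (the values $\sigma_m(\pi_m(\cdots))$ for even $m<n$), so each coordinate of $u(\sigma,\theta)$ depends on only finitely many coordinates of $\sigma$, which is exactly continuity into the product space. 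Second, since $W$ is compact and $A^\bbn$ is Hausdorff, $W$ is closed, so its indicator $\bfone_W$ is upper semicontinuous on $A^\bbn$. Composing an upper semicontinuous function with a continuous map yields an upper semicontinuous function, so $R(\cdot,\theta)$ is upper semicontinuous on $\Sigma$, as required.

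With both hypotheses verified, the Minimax Theorem gives that the mixed extension of $N(\Gamma)$ is determined, i.e.
\[
\sup_{\xi\in\Delta(\Sigma)}\inf_{\theta\in\Theta}\int R(\sigma,\theta)\,\xi(\textd\sigma)
=\inf_{\tau\in\Delta(\Theta)}\sup_{\sigma\in\Sigma}\int R(\sigma,\theta)\,\tau(\textd\theta).
\]
Finally, Kuhn's Theorem — applicable because $\Gamma$ has perfect recall — identifies mixed strategies with behavioral strategies, so the value of $N(\Gamma)$ coincides with the value of $\Gamma$; in particular $N(\Gamma)$ determined implies $\Gamma$ determined, completing the proof.

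The step I expect to require the most care is the continuity of $\sigma\mapsto u(\sigma,\theta)$, and more precisely the asymmetry between the two players. The definition $R(\sigma,\theta)=\bfone_W(u(\sigma,\theta))$ is symmetric, but the Minimax Theorem as quoted demands only that $R(\cdot,\theta)$ be upper semicontinuous in $\sigma$ for each fixed $\theta$; it places no continuity requirement in $\theta$. I therefore want to make sure the compactness is placed on $\Sigma$ (player 1's pure strategies) to match the direction in which semicontinuity is established, and to note that the same argument works verbatim with the roles of the players exchanged should the reverse inequality be needed — but here one application suffices, since the Minimax Theorem already yields the two-sided equality. I would emphasize that no eventual-perfect-monitoring assumption enters: compactness of $W$ together with perfect recall is all that the argument consumes.
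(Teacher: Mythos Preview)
Your proof is correct and follows essentially the same route as the paper: reduce to the normal form via Kuhn's Theorem, then apply the Minimax Theorem using compactness of $\Sigma$ (as a product of finite sets) and upper semicontinuity of $R(\cdot,\theta)=\bfone_W\circ u(\cdot,\theta)$, the latter obtained by composing the continuous play map with the u.s.c.\ indicator of the closed set $W$. Your additional remarks on the continuity of $\sigma\mapsto u(\sigma,\theta)$ and on the irrelevance of eventual perfect monitoring simply make explicit what the paper leaves implicit.
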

\begin{proof}
Let $\Gamma=(A, \{P_n\}_{n\in\bbn},W)$ be an infinite game with a finite action set $A$, perfect recall, and a compact winning set $W$.
By Kuhn's Theorem it is sufficient to prove that the normal form game $N(\Gamma)=(\Sigma,\Theta,R)$ of $\Gamma$ is determined. This follows from the minimax theorem. Indeed, the set $\Sigma$ of pure strategies of player 1 is compact in the product topology, and the payoff function $R$ is upper semicontinuous as a composition of the continuous function $(\sigma,\theta)\mapsto u(\sigma,\theta)$ and the function $\bfone_W$, which is upper semi continuous because $W$ is closed.
\end{proof}
\section{Proof of Theorem~\ref{thetheorem}}\label{sec-proof}
Throughout this section, we fixe a game $\Gamma$ that satisfies the conditions of Theorem 1.3.
\subsection*{Overview of the proof.}
Eventual perfect monitoring entails that the action of stage $m$ is known to the opponent at infinity. Lemma~\ref{lem-konig} below shows that in fact more is true: for every $m$ there exists some finite stage $n>m$ at which, regardless of the play path, the opponent knows the action of stage $m$. This means that at such a stage $n$ the action taken by the player at stage $m$ becomes \emph{common knowledge}: The opponent knows the action played at stage $m$, the player that played the action knows that the opponent knows, the opponent knows that the player that play the action knows that the opponent knows, ad infinitum.

Roughly speaking, I am going to construct a stochastic game $\Gamma^\ast$ with perfect information that mimics the original game $\Gamma$. In $\Gamma^\ast$, at every stage $m$, the player announces a mixture over the set $A$ of actions contingent on his information at that stage. So in $\Gamma^\ast$, instead of choosing an action which is not revealed to his opponent (as in $\Gamma$), the player announces how he intends to  randomize his action. The actual randomization is performed by Chance at a future stage $k(m)$, in which the action should become commonly known, and the realization of that randomization is immediately made public. So in the game $\Gamma^\ast$, Chance performs the randomization for the player. The determinacy of $\Gamma^\ast$ follows from Martin's Theorem, and I prove that the value of $\Gamma^\ast$ is also the value of the original game $\Gamma$. For this purpose I have to show that the fact that in $\Gamma^\ast$ the player announces his randomization plan cannot be used by the opponent to change the payoff in the game. This step, which is the core of the proof, uses approximations of the winning set by compact sets, and the fact that by Lemma~\ref{lem-compact} the original game $\Gamma$ is determined when the winning set is compact.


Since the sets of actions must be finite for Martin's Theorem to apply, I first prove that every behavioral strategy in $\Gamma$ can be approximated by a behavioral strategy in which all the mixtures are taken from some finite sets. This is done in Lemma~\ref{lem-coupling}. Because of the approximation argument, the stochastic game $\Gamma^\ast$ that is constructed in the proof depends on an additional parameter $\ep$ which corresponds to the level of approximation.
\subsection*{Preliminaries}
Let $A^{<\bbn}=\bigcup_{n\in\bbn}A^n$ be the set of finite histories of the game. For a finite history $h\in A^n$, the \emph{length of $h$} is given by $\length(h)=n$. For an infinite history $u=(a_0,a_1,a_2,\dots)\in A^\bbn$ and $n\in \bbn$, let $u|_n=(a_0,\dots,a_{n-1})\in A^{<\bbn}$ be the initial segment of $u$ of length $n$. Similarly, for a finite history $h\in A^{<\bbn}$ and $n<\length(h)$, let $h|_n$ be the initial segment of $h$ of length $n$.
\begin{lemma}\label{lem-konig}
For every $m\in\bbn$ there exists an $n>m$ such that $n\neq m\mod 2$ and such that at stage $n$  the opponent observes the action of stage $m$, i.e., for every pair $u=(a_0,a_1,\dots),u'=(a_0',a_1',\dots)$ of infinite histories $u\sim_n u'$ implies $a_m=a_m'$.\end{lemma}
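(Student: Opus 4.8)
The plan is to argue by contradiction, converting the qualitative ``eventual'' distinguishability guaranteed by eventual perfect monitoring into the quantitative ``there is a fixed finite stage'' statement by means of a compactness (König) argument, with perfect recall supplying the monotonicity that makes the compactness bite.

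Fix $m$, and suppose $m$ is even, so that the opponent plays at the odd stages $n$ (the case of odd $m$ is symmetric). I would first observe that the assertion ``at stage $n$ the opponent observes the action of stage $m$'' is equivalent to saying that every atom of $\partition_n$ is constant in its $m$-th coordinate, and that --- crucially --- this property is monotone in $n$: by the no-forgetting clause of perfect recall, $u\sim_{n+2}u'$ implies $u\sim_n u'$, so if stage $n$ reveals $a_m$ then so does every later stage $n+2,n+4,\dots$ of the same parity. Thus it suffices to produce some odd $n>m$ that reveals $a_m$; equivalently, assuming toward a contradiction that no odd $n>m$ does, I obtain that for every odd $n>m$ there are two histories lying in a common atom of $\partition_n$ whose $m$-th coordinates disagree.

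For each odd $N>m$ define
\[K_N=\{(u,u')\in A^\bbn\times A^\bbn : a_m\neq a_m',\ u\sim_N u'\}.\]
Each $K_N$ is closed (indeed clopen, since both $\sim_N$ and the $m$-th coordinates depend only on the first $N$ letters), and nonempty, since one may extend the two witnessing length-$N$ histories from the previous paragraph arbitrarily to infinite plays. By the monotonicity of $\sim$ noted above, $u\sim_{N+2}u'$ implies $u\sim_N u'$, so the sets $K_{m+1}\supseteq K_{m+3}\supseteq\cdots$ form a decreasing chain of nonempty closed subsets of the compact space $A^\bbn\times A^\bbn$; here finiteness of $A$ is what supplies compactness, and this is the König step. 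Hence $\bigcap_N K_N\neq\emptyset$.

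Finally, I would pick $(u,u')$ in this intersection. Then $a_m\neq a_m'$, so $u\neq u'$, and, invoking monotonicity once more, $u\sim_N u'$ for arbitrarily large odd $N$ forces $u\sim_n u'$ for every odd $n$. Thus no odd stage distinguishes the distinct plays $u$ and $u'$, contradicting eventual perfect monitoring and completing the proof. The one delicate point --- and the reason perfect recall is genuinely needed --- is precisely the passage from the separately chosen witnesses at each stage $n$ to a single pair of infinite plays: without the monotonicity of information the sets $K_N$ need not be nested, and the finite-intersection argument would break down.
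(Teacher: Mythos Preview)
Your proof is correct and follows essentially the same compactness strategy as the paper: perfect recall supplies the monotonicity (nesting) and eventual perfect monitoring the contradiction at the limit. The paper packages the argument as a K\"onig's Lemma application to a tree of ``undetermined'' finite histories (one tree per action $a\in A$), whereas you work directly with nested closed sets of pairs of infinite plays and invoke the finite intersection property; these are equivalent formulations of the same idea.
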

\begin{proof}
Assume w.l.o.g. that $m$ is odd. Let $a\in A$, and let $C_a=\{u=(u_0,u_1,\dots)\in A^\bbn|u_m=a\}$. Then $C_a$ and $C_a^c$ are compact. Let $T_a\subseteq A^{<\bbn}$ be the set of all finite histories $h$ of even length $n$ such that $\pi_{n}^{-1}(h')\cap C_a\neq\emptyset$ and $\pi_{n}^{-1}(h')\cap C_a^c\neq\emptyset$, where $\pi_k(h)$ is the atom of $P_k$ that contains $h$. 

It follows from the perfect recall assumption that $T_a$ is a tree over $A^2$.
I claim that $T_a$ is well-founded. Indeed, if $v$ is an infinite branch of $T_a$, then $\bigcap_{n\geq m}\pi_n^{-1}(v|_n)\cap C_a$ and $\bigcap_{n\geq m}\pi_{n}^{-1}(v|_n)\cap C_a^c$ are nonempty as the intersections of decreasing sequences of compact sets. Let $u=(u_0,u_1,\dots)\in\bigcap_{n\geq m}\pi_{n}^{-1}(v|_n)\cap C_a$ and $u'=(u'_0,u'_1,\dots)\in\bigcap_{n\geq m}\pi_{n}^{-1}(v|_n)\cap C_a^c$. Then $u_m=a\neq u'_m$ and therefore $u\neq u'$, but $u\sim_{n} u'$ for every $n$, in contradiction to the eventual perfect monitoring assumption. 

By K\"{o}nig's Lemma, $T_a$ is finite. Let $n^a$ be the maximal length of elements of $T_a$, and let $n=\max\{n^a|a\in A\}+1 $. Then from stage $n$ onwards the opponent observes the action of stage $m$.
\end{proof}
\subsection*{Approximating strategies}
For two strategies $x,x'$ of player 1, let $d(x,x')$, the \emph{distance between $x$ and $x'$}, be given by
\[d(x,x')=\sum_{n\text{ even}}\max_{p\in P_n}\|x_n(p)-x_n'(p)\|_1,\]
where the maximum is taken over all atoms $p$ of $P_n$.
The distance $d(y,y')$ between two behavioral strategies $y,y'$ of player 2 is defined analogously.
\begin{lemma}\label{lem-coupling}Let $x,x'$ be strategies of player 1 and $y,y'$ be strategies of player 2. Then 
\[\|\mu_{x,y}(W)-\mu_{x',y'}(W)\|\leq\bigl(d(x,x')+d(y,y')\bigr)/2\]
for every Borel subset $W$ of $A^\bbn$.\end{lemma}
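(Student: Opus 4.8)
The plan is to deduce the inequality from a coupling argument, which is cleaner than manipulating the two measures directly and explains the factor $1/2$. Since $\mu_{x,y}(W)$ and $\mu_{x',y'}(W)$ are numbers, the displayed norm is an absolute value, and the claim is a total-variation estimate. Accordingly, I would construct a single probability space carrying two $A^\bbn$-valued processes $(\alpha_n)_{n}$ and $(\alpha'_n)_{n}$ whose laws are $\mu_{x,y}$ and $\mu_{x',y'}$ respectively. For any such coupling and any Borel $W$,
\[
\left|\mu_{x,y}(W)-\mu_{x',y'}(W)\right|=\left|\bbp\bigl((\alpha_n)_n\in W\bigr)-\bbp\bigl((\alpha'_n)_n\in W\bigr)\right|\leq\bbp\bigl((\alpha_n)_n\neq(\alpha'_n)_n\bigr),
\]
so it suffices to arrange that the two plays disagree with probability at most $\bigl(d(x,x')+d(y,y')\bigr)/2$.

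To build the coupling I would specify, stage by stage, the conditional law of $(\alpha_n,\alpha'_n)$ given the joint past $(\alpha_0,\alpha'_0,\dots,\alpha_{n-1},\alpha'_{n-1})$ and invoke the Ionescu--Tulcea theorem for the existence of the joint measure on $A^\bbn\times A^\bbn$. Write $\phi_n$ for the stage-$n$ strategy of the player who moves at stage $n$, so that $\phi_n$ is $x_n$ for even $n$ and $y_n$ for odd $n$, and likewise $\phi'_n$ is $x'_n$ or $y'_n$. The rule is: if the two histories coincide so far, say $\alpha_i=\alpha'_i$ for all $i<n$ with common initial segment $h$, then I draw $(\alpha_n,\alpha'_n)$ from a maximal coupling of $\phi_n(\pi_n(h))$ and $\phi'_n(\pi_n(h))$, i.e. one for which $\bbp(\alpha_n\neq\alpha'_n)=\tfrac{1}{2}\|\phi_n(\pi_n(h))-\phi'_n(\pi_n(h))\|_1$; if the histories have already diverged, I draw $\alpha_n$ and $\alpha'_n$ independently from $\phi_n$ and $\phi'_n$ of their respective pasts. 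In every case the conditional law of $\alpha_n$ given the joint past depends only on the first coordinate's own past and equals $\phi_n$ of it, and symmetrically for $\alpha'_n$; integrating out the other coordinate, the one-step kernels of (\ref{mu-x-y-def}) are reproduced, so the marginal laws are indeed $\mu_{x,y}$ and $\mu_{x',y'}$.

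It then remains to estimate the disagreement probability through the first time the plays differ. Let $\tau=\min\{n:\alpha_n\neq\alpha'_n\}$, with $\tau=\infty$ if the plays agree at every stage, so that $\bbp((\alpha_n)_n\neq(\alpha'_n)_n)=\sum_n\bbp(\tau=n)$. On the event $\{\tau\geq n\}$ the two histories agree up to stage $n-1$ and the stage-$n$ draw uses the maximal coupling; conditioning on the agreed initial segment $h$ gives disagreement probability $\tfrac{1}{2}\|\phi_n(\pi_n(h))-\phi'_n(\pi_n(h))\|_1\leq\tfrac{1}{2}\max_{p\in P_n}\|\phi_n(p)-\phi'_n(p)\|_1$. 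Since $\bbp(\tau\geq n)\leq 1$, this yields $\bbp(\tau=n)\leq\tfrac{1}{2}\max_{p\in P_n}\|\phi_n(p)-\phi'_n(p)\|_1$, and summing over $n$ while splitting according to parity gives
\[
\bbp(\tau<\infty)\leq\frac{1}{2}\sum_{n\text{ even}}\max_{p\in P_n}\|x_n(p)-x'_n(p)\|_1+\frac{1}{2}\sum_{n\text{ odd}}\max_{p\in P_n}\|y_n(p)-y'_n(p)\|_1=\frac{d(x,x')+d(y,y')}{2},
\]
which is exactly the asserted bound.

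The only genuinely delicate points are the construction of the joint measure and the verification of its marginals: I must check that, after integrating out one coordinate's past, the conditional law of the other coordinate's next action is still $\phi_n$ of its own history. This is immediate from the construction, in which each coordinate is always drawn from its own strategy applied to its own past, but it is the step that makes the marginals come out correctly and deserves to be stated carefully. The existence of a measurable maximal coupling of two distributions on the finite set $A$ is elementary. I anticipate no trouble from the stages after divergence, since by the definition of $\tau$ they never enter the sum beyond the first disagreement, which is precisely why the per-stage errors add up without interaction.
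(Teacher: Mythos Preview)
Your proposal is correct and is essentially the same coupling argument as the paper's proof: both introduce the stagewise strategy $\phi_n$ (the paper calls it $z_n$), couple the two plays step by step using a maximal coupling of the stage-$n$ mixtures, verify that each marginal is the correct random play, and then bound the probability of eventual disagreement by summing the per-stage $\tfrac12\|\cdot\|_1$ terms conditioned on agreement so far. Your use of the first-disagreement time $\tau$ and the explicit handling of the post-divergence regime are just a slightly more detailed packaging of the same idea; the paper cites a standard coupling reference in lieu of spelling these out.
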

\begin{proof}The idea is to join a $(x,y)$-random play and a $(x',y')$-random play such that the two random plays are equal with high probability. Let $z_n:P_n\rightarrow A$ be given by $z_n=x_n$ for even $n$'s and $z_n=y_n$ for odd $n$'s and  $z'_n:P_n\rightarrow A$ be given by $z'_n=x'_n$ for even $n$'s and $z'_n=y'_n$ for odd $n$'s. Let 
$\alpha_0,\alpha_0',\alpha_1,\alpha_1',\dots$ be a sequence of $A$-valued random variables defined inductively such that
the conditional joint distribution of the pair $\left(\alpha_n,\alpha_n'\right)$ given the event $\{\alpha_i=a_i,\alpha_i'=a_i'\text{ for }0\leq i<n\}$ satisfies
\begin{align}
&\label{alpha-eq}\bbp\left(\alpha_n=a\left|\alpha_i=a_i,\alpha_i'=a_i'\text{ for }0\leq i<n\right.\right)=z_n(a_0,\dots,a_{n-1})[a],\\
&\label{alpha-prime-eq}\bbp\left(\alpha'_n=a'\left|\alpha_i=a_i,\alpha_i'=a_i'\text{ for }0\leq i<n\right.\right)=z'_n(a'_0,\dots,a'_{n-1})[a'],\text{ and}\\
&\label{diff-eq}\bbp\left(\alpha_n'\neq\alpha_n\left |\alpha_i=a_i,\alpha_i'=a_i'\text{ for }0\leq i<n\right.\right)\leq\|z_n(a_0,\dots,a_{n-1})-z_n'(a'_0,\dots,a'_{n-1})\|_1/2,
\end{align}
for every $n$ and every $a_0,a_0',\dots,a_{n-1},a_{n-1}'\in A$.
The existence of random variables $\alpha_n,\alpha_n'$ with the prescribed conditional distribution follows from a standard coupling argument~\cite[Theorem 5.2]{lindvall}.
From~(\ref{alpha-eq}) it follows that
\[\bbp\left(\alpha_n=a\left|\alpha_i=a_i\text{ for }0\leq i<n\right.\right)=z_n(a_0,\dots,a_{n-1})[a]\]
for every $n$ and every $a_0,\dots,a_{n-1}\in A$, i.e., that $\alpha_0,\alpha_1,\dots$ is an $(x,y)$-random play of $\Gamma(W)$. Similarly, from~(\ref{alpha-prime-eq}) it follows that $\alpha'_0,\alpha'_1,\dots$ is a $(x',y')$-random play of $\Gamma(W)$. From~(\ref{diff-eq}) it follows that 
\[\bbp\left(\alpha_n\neq\alpha'_n\left|\alpha_i=\alpha'_i\text{ for }0\leq i<n\right.\right)\leq \max_{p\in P_n}\|z_n(p)-z_n'(p)\|_1/2.\]
Therefore,
\begin{multline*}\bbp\left(\alpha_n\neq \alpha'_n\text{ for some }n\right)\leq\sum_{n\in\bbn}\bbp\left(\alpha_n\neq\alpha'_n\left|\alpha_i=\alpha'_i\text{ for }0\leq i<n\right.\right)\\\leq \sum_n\max_{p\in P_n}\|z_n(p)-z_n'(p)\|_1/2=\bigl(d(x,x')+d(y,y')\bigr)/2.\end{multline*}
The assertion follows from the last inequality and the fact that $\mu_{x,y}$ and $\mu_{x',y'}$ are the distributions of $\alpha_0,\alpha_1,\dots$ and $\alpha_0',\alpha_1',\dots$, respectively.
\end{proof}
\begin{corollary}\label{cor-coupling}
Let $\Delta_{\ep,n}$ be a finite set which is $\ep/{2^n}$-dense in $\Delta(A)$ endowed with $\|\|_1$, i.e., such that the $\ep/{2^n}$-balls around elements of $\Delta_{\ep,n}$ cover $\Delta(A)$. Then there exists an $\ep$-optimal strategy $y$ for player 2 in $\Gamma(W)$ such that $y_n(p)\in\Delta_{\ep,n}(A)$ for every odd $n$ and every atom $p$ of $P_n$.\end{corollary}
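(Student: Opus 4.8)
The plan is to begin with a near-optimal strategy of player 2 and round its mixtures into the prescribed finite sets, controlling the resulting loss with Lemma~\ref{lem-coupling}. Since player 2 is the minimizer, the relevant benchmark is the upper value, so the definition of $\ep$-optimality for player 2 reads $\mu_{x,y}(W)\leq\overline\val~\Gamma(W)+\ep$ for every strategy $x$ of player 1. First I would invoke $\overline\val~\Gamma(W)=\inf_y\sup_x\mu_{x,y}(W)$ to fix a strategy $y'$ of player 2 that is $\ep/2$-optimal, i.e.\ $\mu_{x,y'}(W)\leq\overline\val~\Gamma(W)+\ep/2$ for every $x$; this $y'$ need not take its mixtures in the sets $\Delta_{\ep,n}$.

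Next I would build the desired $y$ by rounding $y'$ atom by atom. For each odd $n$ and each atom $p$ of $P_n$ (finitely many, since $A$ is finite), density of $\Delta_{\ep,n}$ provides a point $y_n(p)\in\Delta_{\ep,n}$ with $\|y_n(p)-y_n'(p)\|_1\leq\ep/2^n$. Summing over the odd stages gives
\[
d(y,y')=\sum_{n\text{ odd}}\max_{p\in P_n}\|y_n(p)-y_n'(p)\|_1\leq\sum_{n\text{ odd}}\ep/2^n\leq\ep,
\]
so the two strategies of player 2 are close in the distance $d$.

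Finally I would apply Lemma~\ref{lem-coupling} with player 1's strategy held fixed, i.e.\ with $x'=x$, which yields, for every strategy $x$ of player 1,
\[
|\mu_{x,y}(W)-\mu_{x,y'}(W)|\leq\bigl(d(x,x)+d(y,y')\bigr)/2=d(y,y')/2\leq\ep/2.
\]
Combining this with the $\ep/2$-optimality of $y'$ gives $\mu_{x,y}(W)\leq\overline\val~\Gamma(W)+\ep$ for every $x$, so $y$ is the sought $\ep$-optimal strategy with all its mixtures in the finite sets $\Delta_{\ep,n}$.

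I do not expect a genuine obstacle: the statement is essentially a quantitative continuity remark, and Lemma~\ref{lem-coupling} already does the heavy lifting. The only points requiring care are the bookkeeping of the $\ep$-budget -- splitting it as $\ep/2$ for the choice of $y'$ and $\ep/2$ for the rounding, using that the geometric series $\sum_{n\text{ odd}}\ep/2^n$ stays at most $\ep$ -- and the observation that because only player 2's strategy is perturbed, Lemma~\ref{lem-coupling} is applied with player 1's strategy unchanged, which is precisely what makes the resulting bound uniform over all $x$.
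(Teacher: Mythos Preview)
Your proposal is correct and follows essentially the same approach as the paper: start from an $\ep/2$-optimal strategy $y'$ of player 2, round each mixture into $\Delta_{\ep,n}$, bound $d(y,y')$ by the geometric series, and invoke Lemma~\ref{lem-coupling} with player 1's strategy held fixed to conclude that the rounded strategy $y$ is $\ep$-optimal. Your explicit remark that one applies the lemma with $x'=x$ (so that only $d(y,y')$ contributes) is exactly the point the paper uses implicitly.
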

\begin{proof}
Let $y'$ be an $\ep/2$-optimal strategy of player 2 in $\Gamma(W)$ and let $y$ be a strategy of player 2 such that $\|y_n(p)-y_n'(p)\|_1<\ep/2^n$ and $y_n(p)\in\Delta_{\ep,n}(A)$ for every odd $n$ and every atom $p$ of $P_n$. Then $d(y,y')<\ep$, and therefore, 
\[\mu_{x,y}(W)\leq\mu_{x,y'}(W)+\ep/2\leq \overline\val\Gamma(W)+\ep\]for every strategy $x$ of player 1, where the first inequality follows from Lemma~\ref{lem-coupling}, and the second inequality from the fact that $y'$ is $\ep/2$-optimal. Therefore $y$ is $\ep$-optimal.\end{proof}
\subsection*{Chance as the players' randomization delegate}
Let $\Gamma=(A,P_n,W)$ be an infinite game with perfect recall and eventual perfect monitoring. In this section, I define an auxiliary stochastic game $\Gamma^\ast_\ep=\Gamma^\ast_\ep(W)$ with perfect information, which mimics the original game $\Gamma$. 

Fix $\ep > 0$ and, for every $n\in \bbn$, let $\Delta_{\ep,n}$ be a finite subset of the interior of $\Delta(A)$ which is $\ep/{2^n}$-dense in $\Delta(A)$ endowed with $\|\|_1$. For every $m\in\bbn$, fix $k(m)>m$ such that $m\neq k(m)~\mod 2$, and such that at stage $k(m)$ the opponent observes the action of stage $m$, as in Lemma~\ref{lem-konig}.

For every $n$, let $\Bpsi_n=\{\bpsi:P_n\rightarrow\Delta_{\ep,n}\}$ be the set of \emph{actions} of stage $n$ in $\Gamma_\ep^\ast(W)$, so that an action is a function from $P_n$ (viewed as a collection of atoms) to $\Delta_{\ep,n}$; and let $S_n=A^{K_n}$ be the set of \emph{states} of stage $n$ in $\Gamma^\ast_\ep(W)$, where $K_n=\{m|k(m)=n\}$. 
Let $f_n:A^n\rightarrow S_n$ be the projection over the corresponding coordinates $m\in K_n$, and let $F:S_0\times S_1\dots\rightarrow A^\bbn$ be such that 
\begin{equation}\label{F-f}F(f_0(u|_0),f_1(u|_1),\dots)=u\end{equation} for every $u\in A^\bbn$.

$\Gamma^\ast_\ep(W)$ is played as follows: Player 1 plays at even stages and player 2 at odd stages. At every stage $n$, Chance announces a state $s_n$ in $S_n$, and then the player that play at that stage announces an action $\bpsi_n$ in $\Bpsi_n$. Chance chooses the state $s_n$ of stage $n$ from the distribution $\zsigma\left(s_0,\bpsi_0,\dots,s_{n-1},\bpsi_{n-1}\right)$ that is given by
\begin{multline}\label{def-sigma}\zsigma\left(s_0,\bpsi_0,\dots,s_{n-1},\bpsi_{n-1}\right)[s]=\\
\bbp\left(f_n\left(\bar\alpha_0,\dots,\bar\alpha_{n-1}\right)=s\left|f_k(\bar\alpha_0,\dots\bar\alpha_{k-1})=s_k\text{ for }0\leq k<n-1\right.\right),\end{multline}
where $\bar\alpha_0,\dots,\bar\alpha_n$ is a sequence of $A$-valued random variables such that 
\begin{equation}\label{alpha-are-such-that}\bbp\left(\bar\alpha_k=a\left|\bar\alpha_0,\dots,\bar\alpha_{k-1}\right.\right)=\bpsi_k\left(\pi_k(\bar\alpha_0,\dots,\bar\alpha_{k-1}\right))[a].\end{equation}

A \emph{pure strategy} of player 1 in $\Gamma_\ep^\ast(W)$ is a sequence $\{x^\ast_n:S_0\times\Bpsi_0\times\dots\times S_{n-1}\times\Bpsi_{n-1}\times S_n\rightarrow\Bpsi_n\}_{n=0,2,\dots}$ of functions: at stage $n$, after observing the \emph{finite history} $\left(s_0,b_0,\dots,s_{n-1},b_{n-1},s_n\right)$, player 1 plays $x^\ast\left(s_0,b_0,\dots,s_{n-1},b_{n-1},s_n\right)$. Pure strategies $y^\ast$ of Player 2 are defined analogously. Let $X^\ast$ and $Y^\ast$ be the sets of pure strategies of players 1 and 2 respectively. The expected payoff for player 1 in the game $\Gamma_\ep^\ast(W)$ when the players play according to $(x^\ast,y^\ast)$ is given by
$R(x^\ast,y^\ast)=\bbp\left(F(\zeta_0,\zeta_1,\dots)\in W\right)$
where $\zeta_0,\beta_0,\zeta_1,\beta_1,\dots$ is a sequence of random variables, where the values of $\beta_n$ are in $B_n$ and the values of $\zeta_n$ are in $S_n$ such that 
\[\begin{split}&\bbp\left(\zeta_{n}=s\left|\zeta_0,\beta_{0},\dots,\zeta_{n-1},\beta_{n-1}\right.\right)=\zsigma\left(\zeta_0,\beta_{0},\dots,\zeta_{n-1},\beta_{n-1}\right)[s],\\&\beta_n=x^\ast_n\left(\zeta_0,\beta_{0},\dots,\zeta_{n-1},\beta_{n-1},\zeta_n\right)\text{ for even }n,\text{ and}\\&\beta_n=y^\ast_n\left(\zeta_0,\beta_{0},\dots,\zeta_{n-1},\beta_{n-1},\zeta_n\right)\text{ for odd }n.\end{split}\] 
I call such a sequence $\zeta_0,\beta_0,\zeta_1,\beta_1,\dots$ of random variables an \emph{$(x^\ast,y^\ast)$-random play} of $\Gamma^\ast_\ep(W)$.

Identifying the game $\Gamma_\ep^\ast(W)$ with its normal form, say that $\Gamma_\ep^\ast(W)$ is \emph{determined} if 
\[\sup_{\xi\in\Delta(X^\ast)}\inf_{y^\ast\in Y^\ast}\int R(x^\ast,y^\ast)\xi(\textd x^\ast)=\inf_{\tau\in\Delta(Y^\ast)}\sup_{x^\ast\in X^\ast}\int R(x^\ast,y^\ast)\tau(\textd y^\ast).\]
In this case the common value of the two sides of the last equations is called the \emph{value} of the game, and is denoted by $\val~\Gamma^\ast_\ep(W)$.
\begin{lemma}\label{cor-martin}
Let $W\subseteq A^\bbn$ be a Borel set. Then the game $\Gamma^\ast_\ep(W)$ is determined, and $\val~\Gamma^\ast_\ep(W_0)>\val~\Gamma^\ast_\ep(W)-\ep$ for some compact subset $W_0$ of $W$.\end{lemma}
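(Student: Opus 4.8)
The plan is to prove the two assertions separately, the first being essentially immediate and the second carrying the real content. For determinacy, I would note that $\Gamma^\ast_\ep(W)$ is a stochastic game with perfect information in which every action set $\Bpsi_n$ and every state set $S_n$ is finite, and whose payoff is the indicator of $\{(s_0,s_1,\dots):F(s_0,s_1,\dots)\in W\}$. By~\eqref{F-f} the map $F$ is the inverse of $u\mapsto(f_n(u|_n))_n$, hence a homeomorphism of $S_0\times S_1\times\cdots$ onto $A^\bbn$; since $W$ is Borel, this payoff set is Borel. Determinacy of $\Gamma^\ast_\ep(W)$ in the sense of the minimax equality above is then exactly Martin's Theorem (the stochastic form reviewed in the appendix), which settles the first assertion. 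Write $v=\val~\Gamma^\ast_\ep(W)$.

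For the second assertion I would use determinacy to fix an $\ep/2$-optimal mixed strategy $\xi\in\Delta(X^\ast)$ for player $1$ and then freeze it. For each pure strategy $y^\ast\in Y^\ast$ of player $2$, the profile $(\xi,y^\ast)$ together with Nature's transition $\zsigma$ induces a Borel probability measure $\nu_{y^\ast}$ on $A^\bbn$, namely the law of $F(\zeta_0,\zeta_1,\dots)$ in the associated random play; this law does not depend on the winning set, and $\int R(x^\ast,y^\ast)\,\xi(\textd x^\ast)=\nu_{y^\ast}(W)$, so $\ep/2$-optimality gives $\inf_{y^\ast}\nu_{y^\ast}(W)\ge v-\ep/2$. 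The structural point I would exploit is that $Y^\ast$, a countable product of finite sets, is compact, and that $y^\ast\mapsto\nu_{y^\ast}$ is weak${}^{\ast}$-continuous because the probability of any cylinder depends on only finitely many coordinates of $y^\ast$. Consequently $\calm=\{\nu_{y^\ast}:y^\ast\in Y^\ast\}$ is a weak${}^{\ast}$-compact family of measures.

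It then suffices to produce a compact $W_0\subseteq W$ with $\inf_{\nu\in\calm}\nu(W_0)>v-\ep$. Granting this, the frozen $\xi$ guarantees in $\Gamma^\ast_\ep(W_0)$ a payoff $\inf_{y^\ast}\nu_{y^\ast}(W_0)=\inf_{\nu\in\calm}\nu(W_0)>v-\ep$ against every $y^\ast$, so $\val~\Gamma^\ast_\ep(W_0)>\val~\Gamma^\ast_\ep(W)-\ep$, as required. Passing to complements, finding such a $W_0$ is equivalent to finding an open set $U\supseteq W^c$ with $\sup_{\nu\in\calm}\nu(U)$ within $\ep/2$ of $\sup_{\nu\in\calm}\nu(W^c)$; that is, to the capacitability of the Borel set $W^c$ for the set function $B\mapsto\inf\{\sup_{\nu\in\calm}\nu(U):U\supseteq B\text{ open}\}$.

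I expect this capacitability step to be the main obstacle. It is genuinely an exchange of $\sup_\nu$ with the inner approximation of the set, and it fails if one demands genuinely uniform inner regularity of $W$ over $\calm$ (already for $\calm$ the set of all point masses). The route I would take is Choquet's capacitability theorem: using the weak${}^{\ast}$-compactness of $\calm$ together with the upper semicontinuity of $\nu\mapsto\nu(K)$ on closed $K$, one verifies that the displayed set function is a Choquet capacity whose value on compacta is $\sup_{\nu\in\calm}\nu(K)$, whence every Borel set is capacitable and the desired equality holds. Checking the capacity axioms — continuity from below along increasing sequences and continuity from above along decreasing sequences of compacta — is precisely where the compactness of $\calm$ is used, and is the technical heart of the second assertion.
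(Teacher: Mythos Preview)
Your argument is correct, but it takes a different route from the paper. The paper's proof is a two-line reduction to Proposition~\ref{pro-martin}: the game $\Gamma^\ast_\ep(W)$ is the stochastic game $(\cals,\eta^{-1}(W))$ for the continuous map $\eta(s_0,b_0,s_1,b_1,\dots)=F(s_0,s_1,\dots)$, so part~(1) gives determinacy and part~(2) produces a compact $C\subseteq\eta^{-1}(W)$ with $\val(\cals,C)>\val(\cals,\eta^{-1}(W))-\ep$; setting $W_0=\eta(C)$ finishes, since $\eta^{-1}(W_0)\supseteq C$. In other words, the paper treats the compact-approximation step as a black box already contained in the stochastic version of Martin's theorem (attributed to Maitra--Purves--Sudderth, who prove it via Choquet). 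You instead unpack that black box: you fix a near-optimal mixed strategy $\xi$, push forward to obtain a weak$^\ast$-compact family $\calm=\{\nu_{y^\ast}\}$ of measures on $A^{\bbn}$, and then argue Choquet capacitability of $B\mapsto\sup_{\nu\in\calm}\nu(B)$ to find $W_0$. This is essentially a reconstruction of the Maitra--Purves--Sudderth inner-regularity argument, specialized to the situation at hand. Your route is more self-contained and makes explicit \emph{why} compactness of $\calm$ is what drives the approximation; the paper's route is shorter and avoids re-verifying the capacity axioms, which (as you note) is where the real work sits. One small remark: the verification that your outer capacity satisfies continuity from below is the delicate point and does not follow from weak$^\ast$-compactness of $\calm$ alone in an obvious way; it is cleaner to check the axioms directly for $c(B)=\sup_{\nu\in\calm}\nu(B)$ on Borel sets (where going up is just a swap of suprema, and going down on compacta uses compactness of $\calm$), obtain inner regularity of $c$ at $W^c$ from Choquet, and combine with inner regularity of each individual $\nu$ to get the outer-regularity statement you need.
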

\begin{proof}
In the terminology of appendix~\ref{sec-martin}, the game $\Gamma^\ast_\ep(W)$ is the stochastic game with stochastic setup $\cals=\left((S_n,\Bpsi_n)_{n\in\bbn},\zsigma\right)$ and the winning set $\eta^{-1}(W)$, where $\eta:S_0\times\Bpsi_0\times S_1\times\Bpsi_1\times\dots\rightarrow A^\bbn$ is the continuous map given by 
\begin{equation}\label{def-eta}
\eta\left(s_0,\bpsi_0,s_1,\bpsi_1,\dots\right)=F(s_0,s_1,\dots).\end{equation}
Thus $\eta^{-1}(W)$ is a Borel set and therefore by Proposition~\ref{pro-martin} 
the game $\left(\cals,\eta^{-1}(W)\right)$ is determined. Moreover, there exists a compact set 
$C\subseteq S_0\times\Bpsi_0\times S_1\times\Bpsi_1\times\dots$ such that $C\subseteq \eta^{-1}(W)$ and $\val(\cals,C)> \val(\cals,\eta^{-1}(W))-\ep$. Let $W_0=\eta(C)$. Then $W_0$ is a compact subset of $W$ and 
$\val(\cals,\eta^{-1}(W_0))\geq\val(\cals,C)>\val(\cals,\eta^{-1}(W))-\ep$, since $\eta^{-1}(W_0)\supseteq C$. The assertion follows from the fact that the games $\left(\cals,\eta^{-1}(W_0)\right)$ and $\left(\cals,\eta^{-1}(W)\right)$ are $\Gamma^\ast_\ep(W_0)$ and $\Gamma^\ast_\ep(W)$, respectively.\end{proof}
The following lemma says that, up to $\ep$, player 2 can guarantee in $\Gamma^\ast_\ep$ the same amount he can guarantee in $\Gamma$. 
\begin{lemma}\label{thelemma}For every Borel set $W$ of $A^\bbn$,
\[\val~\Gamma^\ast_\ep(W)-\ep\leq \overline\val~\Gamma(W).\]
\end{lemma}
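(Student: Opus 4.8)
The plan is to prove the single inequality $\val~\Gamma^\ast_\ep(W)\leq\overline\val~\Gamma(W)+\ep$ by exhibiting one pure strategy of player 2 in $\Gamma^\ast_\ep(W)$ that holds player 1 down to $\overline\val~\Gamma(W)+\ep$. First I would invoke Corollary~\ref{cor-coupling} to fix an $\ep$-optimal strategy $y$ of player 2 in $\Gamma(W)$ with $y_n(p)\in\Delta_{\ep,n}$ for every odd $n$ and every atom $p$ of $P_n$; each such $y_n$ is then literally an element of $\Bpsi_n$. Let $y^\ast$ be the constant pure strategy of player 2 in $\Gamma^\ast_\ep(W)$ that declares $y^\ast_n(\cdot)=y_n$ at every odd stage, independently of the observed history. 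Since $\val~\Gamma^\ast_\ep(W)\leq\sup_{x^\ast\in X^\ast}R(x^\ast,y^\ast)$ (take $\tau$ to be the point mass at $y^\ast$), it suffices to show $R(x^\ast,y^\ast)\leq\overline\val~\Gamma(W)+\ep$ for every pure strategy $x^\ast$ of player 1.

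Fix such an $x^\ast$ and let $\zeta_0,\beta_0,\zeta_1,\beta_1,\dots$ be the $(x^\ast,y^\ast)$-random play, with underlying $A$-valued variables $\bar\alpha_0,\bar\alpha_1,\dots$ as in~(\ref{alpha-are-such-that}), so that $\zeta_n=f_n(\bar\alpha_0,\dots,\bar\alpha_{n-1})$ and $F(\zeta_0,\zeta_1,\dots)=(\bar\alpha_0,\bar\alpha_1,\dots)$ by~(\ref{F-f}). The heart of the argument is to show that at every even stage $n$ the declared action $\beta_n$ is a deterministic function of the atom $\pi_n(\bar\alpha_0,\dots,\bar\alpha_{n-1})$. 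The state $\zeta_j$ reveals $\bar\alpha_m$ for the stages $m$ with $k(m)=j$, so the states $\zeta_0,\dots,\zeta_n$ observed by player 1 reveal exactly the actions $\bar\alpha_m$ with $k(m)\leq n$. Every such $m$ satisfies $m<k(m)\leq n$; if $m$ is odd then at the even stage $k(m)\leq n$ player 1 observes the action of stage $m$ by Lemma~\ref{lem-konig}, and if $m$ is even then $\bar\alpha_m$ is player 1's own past action — in either case perfect recall forces the value $\bar\alpha_m$ to be determined by $\pi_n(\bar\alpha_0,\dots,\bar\alpha_{n-1})$. Hence the entire observed state history $\zeta_0,\dots,\zeta_n$ is a function of that atom; player 2's declarations are the constants $y_n$, and by induction player 1's earlier declarations $\beta_{n'}$ (even $n'<n$) are determined by the atoms $\pi_{n'}$, which are in turn determined by $\pi_n$ by perfect recall. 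So the whole finite history fed into $x^\ast_n$ is determined by $\pi_n(\bar\alpha_0,\dots,\bar\alpha_{n-1})$, and therefore so is $\beta_n$.

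Granting this, I would define a behavioral strategy $x$ of player 1 in $\Gamma(W)$ by letting $x_n(p)\in\Delta_{\ep,n}$ be the value at $p$ of the unique function $\beta_n\in\Bpsi_n$ that $x^\ast$ declares when the stage-$n$ atom equals $p$. Then~(\ref{alpha-are-such-that}) gives that the conditional law of $\bar\alpha_n$ given the past is $x_n(\pi_n(\bar\alpha_0,\dots,\bar\alpha_{n-1}))$ at even $n$ and $y_n(\pi_n(\bar\alpha_0,\dots,\bar\alpha_{n-1}))$ at odd $n$, so $\bar\alpha_0,\bar\alpha_1,\dots$ is an $(x,y)$-random play of $\Gamma(W)$. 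Consequently
\[R(x^\ast,y^\ast)=\bbp\left((\bar\alpha_0,\bar\alpha_1,\dots)\in W\right)=\mu_{x,y}(W)\leq\overline\val~\Gamma(W)+\ep,\]
the last inequality by $\ep$-optimality of $y$. Taking the supremum over $x^\ast$ yields the assertion.

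The hard part will be the middle paragraph: verifying that the randomizations Nature reveals to player 1 in $\Gamma^\ast_\ep$ carry no more information than the atom $\pi_n$ already available to player 1 in $\Gamma$. This is exactly where eventual perfect monitoring (via Lemma~\ref{lem-konig}) and perfect recall are simultaneously needed, and where the parity bookkeeping — that $k(m)$ has the opposite parity to $m$, so that player 2's actions surface precisely at player 1's even stages — must be tracked carefully. I expect this direction to be the clean one (unlike its companion) only because player 2's declarations are held constant, so that player 1 gleans nothing from seeing them before they are realized; if player 2 were allowed to react, the announcement could leak information and the reduction to a behavioral strategy in $\Gamma$ would break down.
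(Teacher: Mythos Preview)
Your proposal is correct and follows essentially the same approach as the paper: pick an $\ep$-optimal $y$ via Corollary~\ref{cor-coupling}, let player~2 declare the constant $y^\ast_n=y_n$, and show that against this $y^\ast$ any pure $x^\ast$ induces a behavioral strategy $x$ in $\Gamma$ with the same payoff. Your middle paragraph---the parity argument that the revealed states $\zeta_0,\dots,\zeta_n$ are $\pi_n$-measurable---is exactly the content the paper packages into the functions $g_{n,k}$ of~(\ref{def-g}); the only place you are slightly informal is in asserting that the $(x^\ast,y^\ast)$-random play comes equipped with ``underlying'' $\bar\alpha$'s, which the paper makes precise by building the coupled sequence $\zeta_n,\beta_n,\alpha_n$ inductively in~(\ref{def-pi-play})--(\ref{def-alpha-play}).
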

\begin{proof}
Note first that by definition of $K_n$ and from the perfect recall assumption, there exist functions $g_{n,k}:P_n\rightarrow S_k$ for every $n$ and every $k\leq n$ such that 
\begin{equation}\label{def-g}
g_{n,k}(\pi_n(a_0,\dots,a_{n-1}))=f_k(a_0,\dots,a_{k-1})\end{equation}for every $h=(a_0,\dots,a_{n-1})\in A^n$ and where $\pi_n:A^n\rightarrow P_n$ is the natural projection.

Let $y$ be an $\ep$-optimal behavioral strategy for player 2 in $\Gamma(W)$ such that $y_n(p)\in\Delta_{\ep,n}(A)$ for every odd $n$ and every atom $p$ of $P_n$. The existence of such a strategy $y$ follows from Corollary~\ref{cor-coupling}.
Consider a pure strategy $y^\ast$ of player 2 in $\Gamma^\ast_\ep(W)$ that is given by $y^\ast_n(s_0,b_0,\dots,s_{n-1},b_{n-1},s_n)=y_n$ for every odd $n$ and every partial history $(s_0,b_0,\dots,s_{n-1},b_{n-1},s_n)$ of $\Gamma^\ast_\ep(W)$. (Thus, in every odd stage $n$, player 2's action is $y_n$, regardless of the history.) Let $x^\ast$ be any strategy of player 1 in $\Gamma^\ast_\ep$. Let $x$ be the behavioral strategy of player 1 in $\Gamma(W)$ that is given by 
\[x_n(p)=x^\ast_n(s_0,b_0,\dots,s_{n-1},b_{n-1},s_n)(p),\] where $\left(s_0,b_0,\dots,s_{n-1},b_{n-1},s_n\right)$ is the finite history of $\Gamma^\ast_\ep(W)$ defined inductively by $b_k=x^\ast_k\left(s_0,b_0,\dots,s_{k-1},b_{k-1},s_k\right)$ for even $k$, $b_k=y_k$ for odd $k$, and $s_k=g_{n,k}(p)$.  

I am going to join an $(x,y)$-random play of $\Gamma(W)$ and an $(x^\ast,y^\ast)$-random play of $\Gamma^\ast_\ep(W)$ with equal payoffs. Let $\Pi_0,\zeta_0,\beta_0,\alpha_0,\Pi_1,\zeta_1,\beta_1,\alpha_1,\dots$ be a sequence of random variables such that the values of $\zeta_n$ are in $S_n$, the values of $\beta_n$ are in $B_n$, and the values of $\alpha_n$ are in $A$, and such that
\begin{align}
&\label{def-zeta-play}\zeta_n=f_n\left(\alpha_0,\dots,\alpha_{n-1}\right),\\
&\label{def-even-play}\beta_n=x^\ast_n\left(\zeta_0,\beta_{0},\dots,\zeta_{n-1},\beta_{n-1},\zeta_n\right)\text{ for even }n,\\
&\label{def-odd-play}\beta_n=y^\ast_n\left(\zeta_0,\beta_{0},\dots,\zeta_{n-1},\beta_{n-1},\zeta_n\right)\text{ for odd }n\text{, and}\\
&\label{def-alpha-play}\bbp\left(\alpha_n=a\left |\alpha_0,\dots,\alpha_{n-1}\right.\right)=\beta_n\left(\Pi_{n}\right)[a].
\end{align}
From~(\ref{def-g}) and~(\ref{def-zeta-play}) it follows that 
\begin{equation}\label{zeta-g-delta-pi}
\zeta_k=g_{n,k}\left(\pi_n\left(\alpha_0,\dots,\alpha_{n-1}\right)\right)\end{equation}
for every $n$ and every $k\leq n$.
From~(\ref{def-odd-play}) and the definition of $y^\ast$, it follows that $\beta_n=y_n$ for every odd $n$. In particular,
\begin{equation}\label{follows-odd}y_n(\pi_n\left(\alpha_0,\dots,\alpha_{n-1}\right))=\beta_n(\pi_n\left(\alpha_0,\dots,\alpha_{n-1}\right))\end{equation}
for every odd $n$.
From~(\ref{def-even-play}), the definition of $x$,~(\ref{zeta-g-delta-pi}), and the fact that $\beta_k=y_k$ for every odd $k$, it follows that 
\begin{equation}\label{follows-even}x_n\left(\pi_n\left(\alpha_0,\dots,\alpha_{n-1}\right)\right)=\beta_n\left(\pi_n\left(\alpha_0,\dots,\alpha_{n-1}\right)\right)\end{equation}
for every even $n$. 
From~(\ref{def-alpha-play}), (\ref{follows-odd}), (\ref{follows-even}), it follows that 
\[\bbp\left(\alpha_n=a\left|\alpha_0,\dots,\alpha_{n-1}\right.\right)=\begin{cases}x_n(\pi_n(\alpha_0,\dots,\alpha_{n-1})),&n\text{ even,}\\y_n(\pi_n(\alpha_0,\dots,\alpha_{n-1}))&n\text{ odd,}\end{cases}\]
i.e., that $\alpha_0,\alpha_1,\dots$ is an $(x,y)$-random play of $\Gamma(W)$.

From~(\ref{def-zeta-play}),(\ref{def-even-play}),(\ref{def-odd-play}),(\ref{def-alpha-play}), and~(\ref{def-sigma}), it follows that
\begin{equation}\label{follows-zeta}\bbp(\zeta_n=s_n|\zeta_0,\beta_0,\dots,\zeta_{n-1},\beta_{n-1})=z\left(\zeta_0,\beta_0,\dots,\zeta_{n-1},\beta_{n-1}\right)[s_n].\end{equation}
Indeed, given the event $\{\zeta_0=s_0,\beta_0=b_0,\dots,\zeta_{n-1}=s_{n-1},\beta_{n-1}=b_{n-1}\}$, the conditional distribution of $\alpha_0,\dots,\alpha_{n-1}$ is like the conditional distribution of a sequence $\bar\alpha_0,\dots,\bar\alpha_{n-1}$ that satisfies~(\ref{alpha-are-such-that}) given that $f_k(\bar\alpha_0)=s_k$ for $k<n$. (Here I use the fact that $\beta_n$ is measurable with respect to $\zeta_0,\dots,\zeta_n$.)

From~(\ref{follows-zeta}),(\ref{def-even-play}), and~(\ref{def-odd-play}) it follows that $\zeta_0,\beta_0,\zeta_1,\beta_1,\dots$ is an $(x^\ast,y^\ast)$-random play of $\Gamma_\ep(W)$.
Therefore, the expected payoff for player 1 in $\Gamma^\ast_\ep(W)$ under $(x^\ast,y^\ast)$ is 
\[\bbp\left(F(\zeta_0,\zeta_1,\dots)\in W\right)=\bbp\left((\alpha_0,\alpha_1,\dots)\in W\right)=\mu_{x,y}(W)\leq \overline\val~\Gamma(W)+\ep,\]
where the first equality follows from~(\ref{F-f}) and (\ref{def-zeta-play}), the second equality from~(\ref{expected-via-alpha}), and the inequality from the fact that $y$ is $\ep$-optimal.

Summing up, I have provided a pure strategy $y^\ast$ of player 2 in $\Gamma^\ast_\ep(W)$ (namely, play $y_1,y_3,\dots$) that gives expected payoff of at most $\overline\val~\Gamma(W)+\ep$ against any pure strategy $x^\ast$ of player 1 in $\Gamma^\ast_\ep(W)$. Therefore, $\val~\Gamma^\ast_\ep(W)\leq \overline\val~\Gamma(W)+\ep$.
\end{proof}
\subsection*{Proof of Theorem~\ref{thetheorem}}Consider the stochastic game $\Gamma^\ast_\ep(W)$ defined above. Let $W_0$ be a compact subset of $W$ such that $\val~\Gamma^\ast_\ep(W_0)>\val~\Gamma^\ast_\ep(W)-\ep$, and whose existence follows from Lemma~\ref{cor-martin}.   
Then 
\[\underline\val~\Gamma(W)\geq \underline\val~\Gamma(W_0)=\overline\val~\Gamma(W_0)\geq \val~\Gamma^\ast_\ep(W_0)-\ep>\val~\Gamma^\ast_\ep(W)-2\ep,\]
where the first inequality follows from the fact that $W\supseteq W_0$, the first equality follows from Lemma~\ref{lem-compact}, the second inequality follows from Lemma~\ref{thelemma}, and the third inequality follows from the choice of $W_0$.

Similarly, for player 2 we get $\overline\val~\Gamma(W)< \val~\Gamma^\ast_\ep(W)+2\ep$. It follows that $\overline\val~\Gamma(W)< \underline\val~\Gamma(W)+4\ep$. Since $\ep$ was arbitrary, it follows that $\overline\val~\Gamma(W)=\underline\val~\Gamma(W)$.\qed
\section{The role of eventual perfect monitoring}\label{sec-discussion}
In the auxiliary game that I use in the proof of Theorem~\ref{thetheorem}, Chance plays the role of the players `randomization delegate', and all the actions of the players are perfectly monitored. What happens if we try to apply the same argument to a game without eventual perfect monitoring ? 
Consider the following game, which is a modification of the game in Example~\ref{exm-3} with Chance as player 1's randomization delegate.
\begin{example}\label{exm-3-prime}Let $A=\{\texts,\textl\}$ and consider the game with perfect monitoring that is played in stages $n=0,1,\dots$ as follows: Player 1 plays at even stages and player 2 plays at odd stages. At every even stage $n$ player 1 declares a mixed action $p_n\in [0,1]$, viewed as a dice over $A$ 
  ($p_n$ is the probability that player 1 will leave the game at stage $n$ if he didn't stop already). At every odd stage $n$ player 2 declares an action $A=\{\texts,\textl\}$. At infinity, Chance tosses the dice that player 1 declared one after another, thus obtaining an infinite history $h\in A^\infty$. Player 1 wins the game if $h\in W$, where $W$ is given by~(\ref{def-w-examples}). \end{example}
Unlike the original game of Example~\ref{exm-3}, the game of Example~\ref{exm-3-prime} is determined and has value $1/2$. An optimal strategy for player 2 is to leave the game at the first stage $n$ for which $(1-p_0)\cdot(1-p_2)\cdot\dots\cdot (1-p_{n-1})<1/2$. However, as we saw, in the game of Example~\ref{exm-3}, where player 1's action is not monitored by player 2, player 2 cannot guarantee to pay a payoff smaller than $1$.

So, the transition from imperfect monitoring to Chance acting as the player's randomization delegate might fundamentally change the strategic potential of game in the absence of eventual perfect monitoring. On the other hand, in games with eventual perfect monitoring, as with finite games, a player does not lose anything if he declares his mixed contingent plan and let Chance do the randomization for him.

The idea that a player declares his contingent mixed plan and let Chance randomize for him can be traced back to von Neumann and Morgenstern's discussion of mixed actions. They argue that a player who fears his plan will be found out or deduced by the opponent may want to use randomization. When the player let Chance choose his action, the opponent 
cannot possibly find out what the action is going to be, since the player does not 
know it himself. ``Ignorance is obviously a very good safeguard against disclosing information directly or indirectly''~\cite[Section 17.2.1]{von-neumann-morgenstern}
The fact that the game is determined shows that the player who follows the Founders' 
advice does not lose anything if his plan is indeed found out, so he might as well declare it. In the proof of Theorem~\ref{thetheorem} we walked the reverse path: Determinacy follows from the fact that a player does not lose anything if he declares his contingent plan. Example~\ref{exm-3-prime} shows that the argument relies on the assumption of eventual perfect monitoring.
\section{Open questions}\label{sec-open}
The following questions about possible extensions of Theorem~\ref{thetheorem} are interesting for their own sake, and also because they highlight the assumptions on the game that were used in the proof of Theorem~\ref{thetheorem}.

The first question is about relaxing eventual perfect monitoring. Say that an infinite game $\Gamma=\left(A,(\partition_n)_{n\geq 0},W\right)$ admits \emph{eventual outcome monitoring} if both players know the identity of the winner (that is, the outcome of the game) at infinity. Formally, this means that for every $u,u'\in A^\bbn$ such that $u\in W$ and $u'\notin W$, there exists an $n$ such that $u\nsim_k u'$ for every $k>n$. 
\begin{question*}Is every game with perfect recall and eventual outcome monitoring determined ?\end{question*}
The construction of the auxiliary game $\Gamma^\ast$ in the proof of Theorem~\ref{thetheorem} relies on Lemma~\ref{lem-konig}, that if both players know the play path at infinity then each action becomes commonly known at some finite time, and so the play path in $\Gamma$ can be reconstructed from Chance's actions in $\Gamma^\ast$. However, when both players know the occurrence of $W$ at infinity, it still needs not be the case that $W$ depends only on things that are commonly known at some finite times.

The second questions relates to general payoff functions. Assume that the game is played as in Section~\ref{sec-setup}, and at infinity player 2 pays player 1 the amount $f(a_0,a_1,\dots)$, where $f:A^\bbn\rightarrow [0,1]$ is a Borel function, the \emph{payoff function} of the game. The games studied in this paper are a special case, where $f=\bfone_W$ is an indicator function. The definition of determinacy for games with Borel payoff function is a straightforward extension of the definition in Section~\ref{sec-setup}.
\begin{question*}Is every game with eventual perfect monitoring and general payoff function determined ?\end{question*}
In the proof of Theorem~\ref{thetheorem} we approximated the winning set $W$ from below by a closed set, and used Lemma~\ref{lem-compact} that games with closed winning sets are determined. Lemma~\ref{lem-compact} and its proof hold for upper semi-continuous payoff functions. However, upper semi-continuous functions are not sufficient to approximate arbitrary payoff functions in Martin's Theorem about Blackwell Games. (one needs to use $G_\delta$-functions~\cite[p. 1576, Remark (c)]{martin-98}).

One can also combine the two questions: What about game with general payoff function, perfect recall and eventual payoff monitoring (i.e., for every histories $u,u'\in A^\bbn$ such that $f(u)\neq f(u')$ there exists an $n$ such that $u\nsim_k u'$ for every $k>n$) ? In this case the game needs not be determined. Indeed, Rosenberg and Vieille give an example of a non-determined recursive game with incomplete information on one side~\cite[Section 4.1, henceforth RV]{rosenberg-vieille}. To modify RV's game to an infinite game with eventual payoff monitoring, replace Nature's randomization in RV's game with two stages in which player 1 and then player 2 choose a bit from $\{0,1\}$ and the sum of these bits modulo $2$ is the state of nature. Player 1's initial choice is not monitored by player 2, but player 2's choice is monitored by player 1 so that player 1 knows the state of nature. Then the game proceeds as in RV's game until there is an absorption, in which case player 2 is notified of the initial bit chosen by player 1. If there is no absorption then at infinity player 2 will not know the initial bit of player 1, but will know that the payoff is $0$.
Since by choosing his initial bit uniformly every player can guarantee that the sum modulo $2$ is uniform, the indeterminacy of RV's game implies the indeterminacy of the modified game.
\appendix\section{Martin's Theorem for Stochastic Games}\label{sec-martin}
In this section I formulate Martin's Theorem about the determinacy of stochastic games. The stochastic game used in this paper has complete information, while Martin studied a more general setup in which the players play simultaneously. Note, however, that Martin's older theorem about the determinacy of Gale-Stewart games~\cite{martin-75} is not sufficient for my purposes because of the presence of Chance.

A \emph{stochastic game with perfect information} is given by $\left((S_n,\Bpsi_n)_{n\in\bbn},\zsigma, V\right)$, where $\Bpsi_0,B_1,\dots$ are finite sets of \emph{actions}, $S_0,S_1,\dots$ are finite sets of \emph{states} or \emph{Chance's actions}, $\zsigma=\{\zsigma_n:S_0\times\Bpsi_0\times\dots\times S_{n-1}\times\Bpsi_{n-1}\rightarrow \Delta(S_n)\}$ is \emph{Chance's strategy}, and $V\subseteq S_0\times\Bpsi_0\times S_1\times\Bpsi_1\times\dots$ is the \emph{winning set} of Player 1.

The game is played as follows: Player 1 plays at even stages and player 2 at odd stages. At every stage $n$, Chance announces a state $s_n$ in $S_n$, and then the player that plays at that stage announces an action $\bpsi_n$ in $\Bpsi_n$. Chance chooses the state $s_n$ of stage $n$ from the distribution $z(s_0,b_0,\dots,s_{n-1},b_{n-1})$. Player 1 wins the game if $\left(s_0,b_0,s_1,b_1,\dots\right)\in V$. 

I call a triple $\cals=\left((S_n,B_n)_{n\in\bbn},\zsigma,\right)$ of action sets, states sets, and Chance's strategy a \emph{stochastic setup}. So the stochastic games that I use in this paper are given by a stochastic setup $\cals=\left((S_n,\Bpsi_n)_{n\in\bbn},\zsigma\}\right)$ and a winning set $V\subseteq S_0\times\Bpsi_0\times S_1\times\Bpsi_1\times\dots$.

The definitions of strategies of the players, and of determinacy and value of the game, are omitted. 

The following proposition was proved by Martin~\cite{martin-98}. For the stochastic extension, see Maitra and Sudderth's paper~\cite{maitra-sudderth-98}. The fact that the lower value of the game can be approximated by the value on some compact subset was proved earlier by Maitra, Purves and Sudderth~\cite{maitra-purves-sudderth-92}.
\begin{proposition}\label{pro-martin}Let $\cals=\left((S_n,\Bpsi_n)_{n\in\bbn},\zsigma\}\right)$ be a stochastic setup, and let $V$ be a Borel subset of $S_0\times\Bpsi_0\times S_1\times\Bpsi_1\times\dots$. Then:
\begin{enumerate}
\item The game $\left(\cals,V\right)$ is determined.
\item For every $\ep>0$, there exists a compact subset $C$ of $V$ such that
\[\val\left(\cals,C\right)>\val\left(\cals,V\right)-\ep.\]\end{enumerate}
\end{proposition}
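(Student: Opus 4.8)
The plan is to prove (1) by Martin's method of \emph{covers} together with a transfinite induction on the Borel rank of $V$, and to prove (2) by showing that the value is a Choquet capacity and invoking the capacitability theorem.

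For (1), note first that the two players have perfect information about each other's moves; the only genuine randomness comes from Nature's kernel $\zsigma$. Consequently every \emph{finite}-horizon truncation of $(\cals,V)$ is solved by backward induction: at a node where a player moves one takes a $\max$ or a $\min$ over the finitely many actions, and at a Nature node one takes the expectation under $\zsigma$. This yields finite-horizon values, and the whole issue is the passage to the infinite horizon with a Borel $V$. I would first dispose of the base case in which $V$ is closed: writing $V=\bigcap_k U_k$ with the $U_k$ clopen and decreasing, the finite-horizon values $v_k$ of the games ``stay inside $U_k$ through stage $k$'' decrease to a limit, and one checks by patching together the finite-horizon optimal strategies that this limit is simultaneously the lower and the upper value. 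The inductive step is the heart of Martin's argument: assuming determinacy for all winning sets of rank $<\alpha$, one builds a \emph{cover} of $(\cals,V)$ — an auxiliary stochastic game $(\tilde\cals,\tilde V)$ of strictly smaller complexity, together with maps translating plays and strategies in both directions, so that the value is preserved and determinacy of the cover transfers back to $(\cals,V)$. The cover augments each move with commitments that ``unravel'' one application of the Borel operation defining $V$, and Nature's kernel $\zsigma$ is carried along into $\tilde\cals$; iterating down to a closed set and applying the base case closes the induction. Incorporating the stochastic transitions into the covers is exactly the extension of Maitra and Sudderth over Martin's original argument for Blackwell games.

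For (2), I would set $c(V)=\val(\cals,V)$, which is well defined on Borel sets by part (1), and show that $c$ is a Choquet capacity on the compact metrizable space $\Omega=S_0\times\Bpsi_0\times S_1\times\Bpsi_1\times\cdots$. Monotonicity, $V\subseteq V'\Rightarrow c(V)\le c(V')$, is immediate. Continuity from above along decreasing sequences of compact (equivalently here, closed) sets follows from compactness together with the determinacy of closed-set games, since membership in the limit set is decided by longer and longer finite prefixes. Continuity from below, $V_k\uparrow V\Rightarrow c(V_k)\uparrow c(V)$, is the substantive property: given a strategy of player 1 that is $\ep$-optimal for $V$, a finite-horizon approximation argument shows that it already guarantees nearly $c(V)$ on some $V_k$. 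Granting that $c$ is a capacity, Choquet's capacitability theorem applies to the analytic — in particular Borel — set $V$ and yields $c(V)=\sup\{c(C):C\subseteq V,\ C\ \text{compact}\}$, which is precisely the inner approximation asserted in (2); this is the route of Maitra, Purves, and Sudderth.

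The main obstacle is the cover construction in the inductive step of (1): arranging an auxiliary game whose winning condition is of strictly lower Borel rank while faithfully carrying Nature's kernel $\zsigma$ and preserving the value under the translation of strategies. Verifying continuity from below of the value in (2) is a secondary but genuinely nontrivial point, since it is exactly there that the game-theoretic content enters the otherwise formal capacitability machinery.
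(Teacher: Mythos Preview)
The paper does not prove this proposition; it is quoted from the literature, with part (1) attributed to Martin (1998) and its stochastic extension to Maitra--Sudderth (1998), and part (2) to Maitra--Purves--Sudderth (1992) via Choquet's capacitability theorem. Your sketch for (2) is exactly that route and is fine, modulo the technical point that the capacity should be taken to be the \emph{lower} value $\underline{\val}(\cals,\cdot)$, which is defined for \emph{all} subsets of $\Omega$, not just Borel ones; defining $c$ only on Borel sets via part (1) is not enough to feed into Choquet's theorem.

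Your sketch for (1), however, misrepresents what Martin (1998) and Maitra--Sudderth (1998) actually do, and the approach you describe would hit a real obstacle. Neither paper redoes the 1975 unraveling/cover machinery inside the stochastic (or Blackwell) setting, carrying Nature's kernel $\zsigma$ along through a transfinite induction on Borel rank. Instead, Martin's 1998 argument \emph{reduces} the Blackwell game to a single auxiliary \emph{pure} perfect-information game (no Nature, winning set $\{0,1\}$-valued) in which one player additionally plays real-valued ``witness'' functions encoding putative conditional values; Borel determinacy (1975) is then applied once to that auxiliary game, and a winning strategy there is translated back into an $\ep$-optimal strategy in the original game. Maitra--Sudderth's stochastic extension follows the same pattern. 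The difficulty with your proposal is that Martin's unraveling is fundamentally a device for $\{0,1\}$-valued games with pure optimal strategies: the ``commitments'' added at each step are quasi-strategies, and the transfer of a winning strategy through the cover uses that determinacy means one side \emph{wins outright}. With Nature present the value lies in $[0,1]$, neither player need win outright, and it is not at all clear what the analogue of ``unraveling one Borel operation while preserving the value'' should be. So the inductive step you label ``the heart of Martin's argument'' is not the argument actually used for this proposition, and making it work directly would amount to a new proof rather than a citation.
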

\bibliographystyle{plain}
\bibliography{references}
\end{document}